\documentclass{amsart}

\usepackage{amssymb}
\usepackage{mathrsfs}

\theoremstyle{plain}
\newtheorem{theorem}{Theorem}[section]
\newtheorem{lemma}[theorem]{Lemma}
\newtheorem{corollary}[theorem]{Corollary}

\theoremstyle{remark}
\newtheorem*{remark}{Remark}
\newtheorem*{remarks}{Remarks}

\numberwithin{equation}{section}
\numberwithin{table}{section}

\newcommand{\ZpL}[1]{\mathbb{Z}_{p}^{\mathcal{L}_{{#1}}}}

\newcommand{\FqL}[1]{\mathbb{F}_{q}^{\mathcal{L}_{{#1}}}}

\newcommand{\RL}[1]{R^{\mathcal{L}_{{#1}}}}
\newcommand{\FL}[1]{F^{\mathcal{L}_{{#1}}}}
\newcommand{\BB}{{\mathcal{B}}}
\newcommand{\CC}{{\mathcal{C}}}
\newcommand{\DD}{{\mathcal{D}}}
\newcommand{\RR}{{R}}
\newcommand{\LL}{{\mathcal{L}}}
\newcommand{\HH}{{\mathcal{H}}}
\newcommand{\Zp}{{\mathbb{Z}_{p}}}
\newcommand{\Qp}{{\mathbb{Q}_{p}}}
\newcommand{\Fp}{{\mathbb{F}_{p}}}
\newcommand{\Fq}{{\mathbb{F}_{q}}}

\newcommand{\leftsub}[2]{{\vphantom{#2}}_{#1}{#2}}
\newcommand{\qbinom}[2]{{\left[\begin{smallmatrix}#1\\#2\end{smallmatrix}\right]_q}}
\newcommand{\abs}[1]{\lvert#1\rvert}
\DeclareMathOperator{\im}{Im}
\DeclareMathOperator{\Hom}{{\mathrm{Hom}}}
\DeclareMathOperator{\GL}{{\mathrm{GL}}}
\DeclareMathOperator{\PG}{{\mathrm{PG}}}

\begin{document}

\title[Elementary Divisors]{The Elementary Divisors of the Incidence Matrix of Skew Lines in $\mathrm{PG}(3,q)$}

\author{Andries E.  Brouwer}
\address{Dept. of Mathematics, Techn. Univ. Eindhoven, 5600MB Eindhoven, Netherlands} 
\email{aeb@cwi.nl}

\author{Joshua E. Ducey}
\address{Dept. of Mathematics, University of Florida, Gainesville, FL 32611--8105, USA}
\email{jducey21@ufl.edu}

\author{Peter Sin}
\address{Dept. of Mathematics, University of Florida, Gainesville, FL 32611--8105, USA}
\email{sin@ufl.edu}

\subjclass[2010]{Primary 05B20, 05E30; Secondary 20C33, 51E20}

\begin{abstract}
The elementary divisors of the incidence matrix of lines in $\operatorname{PG}(3,q)$ are computed, where two lines are incident if and only if they are skew.
\end{abstract}

\maketitle

%
%
%
%
%
\section{Introduction}
Let $V$ be a $4$-dimensional vector space over the finite field $\mathbb{F}_{q}$ of $q=p^t$ elements, where $p$ is a prime.  We declare two $2$-dimensional subspaces $U$ and $W$ to be incident if and only if $U \cap W = \{0\}$.  Ordering the $2$-dimensional subspaces in some arbitrary but fixed manner, we can form the incidence matrix $A$ of this relation.  By the well-known Klein correspondence, $A$ is also the adjacency matrix of the non-collinearity graph on the points of the Klein quadric.  The entries of this zero-one matrix may be read over any commutative ring.  In this paper we compute the Smith normal form of $A$ as an integer matrix.

In order to introduce some useful notation, we will view this setup as a special case of a more general situation.  For brevity, an $r$-dimensional subspace will be called an $r$-subspace in what follows.

More generally, let $V$ be an $(n+1)$-dimensional vector space over the finite field $\mathbb{F}_{q}$, where $q = p^{t}$ is a prime power.  Let $\mathcal{L}_{r}$ denote the set of $r$-subspaces of $V$.  Thus $\mathcal{L}_{1}$ denotes the points, $\mathcal{L}_{2}$ denotes the lines, etc. in $\mathbb{P}(V)$.  An $r$-subspace $U \in \mathcal{L}_{r}$ and an $s$-subspace $W \in \mathcal{L}_{s}$ are incident if and only if $U \cap W = \{0\}$.  The incidence matrix with rows indexed by the $r$-subspaces and columns indexed by the $s$-subspaces is denoted $A_{r,s}$.

These matrices $A_{r,s}$ and other closely related ones have been studied by a number of mathematicians.  The reader is referred to the surveys~\cite{survey1, survey2} (see also~\cite[Introduction]{chandler:sin:xiang:2006}).  Their $p$-ranks are determined in~\cite{sin:2004}, but their elementary divisors have been computed only in the case that either $r$ or $s$ is equal to one~\cite{lander, sin:2000, chandler:sin:xiang:2006}.

We return now to the situation where $V$ is $4$-dimensional over $\mathbb{F}_{q}$ and $A = A_{2,2}$.
\section{The Main Results}
It turns out that the elementary divisors of $A$ are all powers of $p$.  A quick way to see this is to regard $A$ as the adjacency matrix of the graph with vertex set $\mathcal{L}_{2}$, where two lines are adjacent when skew.  This is a strongly regular graph, with parameters $v = q^4 + q^3 + 2q^2 + q + 1$, $k = q^4$, $\lambda = q^4 - q^3 - q^2 + q$, $\mu = q^4 - q^3$.  Thus $A$ satisfies the equation
\begin{equation}
\label{eq:matrix1}
A^2 = q^{4}I + (q^4 - q^3 - q^2 + q)A + (q^4 - q^3)(J - A - I),
\end{equation}
where $I$ and $J$ denote the identity matrix and all-one matrix, respectively, of the appropriate sizes.  From this equation one deduces that the eigenvalues of $A$ are $q$, $-q^2$, and $q^4$ with respective multiplicities $q^4 + q^2$, $q^3 + q^2 + q$, and $1$.  Since $\abs{\det(A)}$ is the product of the elementary divisors, we see that the elementary divisors of $A$ are all powers of $p$.

The following two theorems give the Smith normal form of $A$.
\begin{theorem}
\label{thm:A}
Let $e_{i} = e_{i}(A)$ denote the multiplicity of $p^i$ as an elementary divisor of $A$.
\begin{enumerate}
\item \label{item:A1} $e_{i} = e_{3t-i}$ for $0 \le i < t$.
\item \label{item:A2} $e_{i} = 0$ for $t < i < 2t$, $3t < i < 4t$, and $i > 4t$.
\item \label{item:A3} $\sum_{i=0}^{t}e_{i} = q^4 + q^2$.
\item \label{item:A4} $\sum_{i=2t}^{3t}e_{i} = q^3 + q^2 + q$.
\item \label{item:A5} $e_{4t} = 1$.
\end{enumerate}
\end{theorem}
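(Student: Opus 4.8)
The plan is to pass to the $p$-adic integers $\Zp$ and study $A$ as an endomorphism of the free module $L=\ZpL{2}$ of rank $v=q^{4}+q^{3}+2q^{2}+q+1$. First I would peel off the trivial eigenvalue. Since $v\equiv 1\pmod p$, the operator $v^{-1}J\in\mathrm{End}(L)$ is a well-defined idempotent, and because $AJ=JA=q^{4}J$ it commutes with $A$; this splits $L=\Zp\mathbf{1}\oplus L_{0}$ as an $A$-stable direct sum, where $L_{0}=\ker(v^{-1}J)=\mathbf{1}^{\perp}$ is the augmentation-zero sublattice. On $\Zp\mathbf{1}$ the map $A$ is multiplication by $q^{4}=p^{4t}$, giving a single elementary divisor $p^{4t}$; as elementary divisors of a direct sum are the union of those of the summands, it remains to analyze $A_{0}:=A|_{L_{0}}$, whose only eigenvalues are $q$ and $-q^{2}$. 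Rewriting \eqref{eq:matrix1} and using $J|_{L_{0}}=0$ yields the clean relation $(A_{0}-qI)(A_{0}+q^{2}I)=0$.

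Next I would build the eigenlattices. The rational projections onto the eigenspaces $W_{q},W_{-q^{2}}$ of $A_{0}\otimes\Qp$ are $P=(A_{0}+q^{2}I)/(q(q+1))$ and $Q=I-P$; since $q(q+1)$ has $p$-valuation exactly $t$ (because $q+1$ is a unit), both $P$ and $Q$ have denominator $p^{t}$. Putting $\Lambda_{1}=P(L_{0})$, $\Lambda_{2}=Q(L_{0})$ and $\Lambda=\Lambda_{1}\oplus\Lambda_{2}$ gives a chain $p^{t}\Lambda\subseteq L_{0}\subseteq\Lambda$, with $A_{0}$ acting as the scalar $q$ on $\Lambda_{1}$ and as $-q^{2}$ on $\Lambda_{2}$. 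Thus $L_{0}$ surjects onto each summand, i.e. it is a gluing of $\Lambda_{1}$ and $\Lambda_{2}$ along a finite group $H=\Lambda/L_{0}$ annihilated by $p^{t}$, with $\Lambda_{1}/(L_{0}\cap\Lambda_{1})\cong H\cong\Lambda_{2}/(L_{0}\cap\Lambda_{2})$. The key point is that $A_{0}$ is scalar on each $\Lambda_{i}$, so changing bases of $\Lambda_{1}$ and $\Lambda_{2}$ to put the gluing isomorphism in Smith form keeps $A_{0}$ diagonal; this decomposes $(L_{0},A_{0})$ into unglued rank-one pieces together with glued rank-two pieces, one for each cyclic factor $\mathbb{Z}/p^{f}$ of $H$ (with $1\le f\le t$).

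On an unglued generator $A_{0}$ acts as $q$ or $-q^{2}$, giving elementary divisors $p^{t}$ (multiplicity $\dim W_{q}-s=q^{4}+q^{2}-s$) and $p^{2t}$ (multiplicity $q^{3}+q^{2}+q-s$), where $s$ is the number of cyclic factors of $H$. On a glued pair of depth $f$, with $L_{0}$-basis $g=u+w$ and $p^{f}w$ (where $A_{0}u=qu$, $A_{0}w=-q^{2}w$), a short computation gives the matrix $\left[\begin{smallmatrix}p^{t}&0\\-p^{t-f}(1+q)&-p^{2t}\end{smallmatrix}\right]$; its determinant has valuation $3t$, while the off-diagonal entry is $p^{t-f}$ times the unit $(1+q)$, so its Smith form is $\mathrm{diag}(p^{t-f},p^{2t+f})$. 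Hence each depth-$f$ gluing contributes the symmetric pair $\{p^{t-f},p^{2t+f}\}$. Collecting all contributions proves the five statements simultaneously: the pairs and the scalar blocks place every elementary divisor in $[0,t]\cup[2t,3t]\cup\{4t\}$, which is part~\eqref{item:A2} and forces $e_{4t}=1$ for part~\eqref{item:A5}; the map $i\mapsto 3t-i$ sends $t-f$ to $2t+f$, giving $e_{i}=e_{3t-i}$ for $0\le i<t$, which is part~\eqref{item:A1}; and since each gluing contributes one divisor to each band while the unglued generators fill out the rest, the band totals are $\dim W_{q}=q^{4}+q^{2}$ and $\dim W_{-q^{2}}=q^{3}+q^{2}+q$, which are parts~\eqref{item:A3} and~\eqref{item:A4}.

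The main obstacle is the passage in the second paragraph: justifying that $L_{0}$ is genuinely a transversal gluing of the two eigenlattices and, above all, that the off-diagonal entry of each rank-two block is a unit times $p^{t-f}$ rather than something of higher valuation. This is exactly where the specific eigenvalues enter, through the unit $1+q$; it is what forces the splitting $p^{t-f}\oplus p^{2t+f}$ in place of the diagonal $p^{t}\oplus p^{2t}$, and hence produces both the band gaps and the $i\mapsto 3t-i$ symmetry. I expect the depths $f_{j}$ themselves — equivalently the exact values of the $e_{i}$ — not to be pinned down by this argument: Theorem~\ref{thm:A} is precisely the part of the Smith normal form that is independent of the gluing data, and determining the multiset $\{f_{j}\}$ should require the finer module-theoretic input of the companion theorem.
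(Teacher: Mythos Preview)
Your argument is correct and is a genuinely different route from the paper's. The paper never builds the eigenlattices explicitly; instead it uses the factorization $A|_{Y_2}\bigl(A|_{Y_2}+(q^{2}-q)I\bigr)=q^{3}I$ on the augmentation-zero summand to see that the elementary divisors of the two factors pair off to $p^{3t}$, giving $e_{i}\bigl(A|_{Y_2}+(q^{2}-q)I\bigr)=e_{3t-i}\bigl(A|_{Y_2}\bigr)$, and then invokes the congruence $A|_{Y_2}\equiv A|_{Y_2}+(q^{2}-q)I\pmod{p^{t}}$ together with Lemma~\ref{lem:B} to obtain the symmetry~\eqref{item:A1}. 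Parts~\eqref{item:A3} and~\eqref{item:A4} are then handled by a separate dimension count, observing that the pure eigenlattices $V_{q}\cap\ZpL{2}$ and $V_{-q^{2}}\cap\ZpL{2}$ land in $N_{t}$ and $M_{2t}$ respectively. Your approach, by contrast, uses the other factorization $(A_{0}-qI)(A_{0}+q^{2}I)=0$ to split $L_{0}\otimes\Qp$ and then analyzes the Goursat gluing of the two eigenlattices; because $A_{0}$ is scalar on each eigenlattice, the whole problem reduces to $2\times 2$ blocks whose Smith forms you compute directly. The step you flag as the ``main obstacle'' is in fact routine: the off-diagonal entry is literally $-(1+q)p^{\,t-f}$, and $1+q\equiv 1\pmod p$ is a unit in $\Zp$, so the block has Smith form $\mathrm{diag}(p^{\,t-f},p^{\,2t+f})$ exactly as you say. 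What your method buys is a uniform, constructive picture in which all five parts of Theorem~\ref{thm:A} drop out of a single block decomposition; what the paper's method buys is brevity and a clean separation between the symmetry (from the product formula) and the band totals (from eigenspace dimensions). Both arguments, as the paper remarks, use only the strongly regular graph parameters and no geometry of $\PG(3,q)$.
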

Thus we get all the elementary divisor multiplicities once we know $t$ of the numbers $e_{0}, \dots , e_{t}$ (or the numbers $e_{2t}, \dots ,e_{3t}$).  The next theorem describes these.  To state the theorem, we need some notation.

Set
\[
[3]^{t} = \{(s_0, \dots ,s_{t-1}) \, | \, s_{i} \in \{1,2,3\} \hbox{ for all $i$}\}
\]
and
\[
\mathcal{H}(i) = \bigl\{(s_0, \dots ,s_{t-1}) \in [3]^{t} \, \big\vert \, \#\{j|s_{j}=2\}=i\bigr\}.
\]
In other words, $\mathcal{H}(i)$ consists of the tuples in $[3]^{t}$ with exactly $i$ twos.  To each tuple $\vec{s} \in [3]^{t}$ we associate a number $d(\vec{s})$ as follows.  For $\vec{s} = (s_0, \dots ,s_{t-1}) \in [3]^{t}$ define the integer tuple $\vec{\lambda} = (\lambda_{0}, \dots ,\lambda_{t-1})$ by 
\[
\lambda_{i} = ps_{i+1} - s_{i},
\]
with the subscripts read modulo %
%
%
$t$. For an integer $k$, set $d_{k}$ to be the coefficient of $x^{k}$ in the expansion of $(1 + x + \cdots + x^{p-1})^{4}$.  Finally, set $d(\vec{s}) = \prod_{i=0}^{t-1}d_{\lambda_{i}}$.
\begin{theorem}
\label{thm:B}
Let $e_{i} = e_{i}(A)$ denote the multiplicity of $p^i$ as an elementary divisor of $A$.  Then, for $0 \le i \le t$,
\[
e_{2t+i} = \sum_{\vec{s} \in \mathcal{H}(i)}d(\vec{s}).
\]
\end{theorem}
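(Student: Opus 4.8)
The plan is to recast the multiplicities $e_{2t+i}$ as dimension jumps in a descending chain of $\Fq\GL(V)$-submodules and then to evaluate these jumps using the known modular structure of the permutation module together with a monomial-basis analysis of $A$. First I would pass to the unramified extension $R$ of $\Zp$ with residue field $\Fq$, put $L = \RL{2}$, and view $A$ as a $\GL(V)$-equivariant endomorphism of $L$. Since $\det A \neq 0$, the submodules $M_i = \{x \in L \mid Ax \in p^i L\}$ form a descending chain of $R\GL(V)$-lattices, and choosing a basis that puts $A$ in Smith normal form gives $e_i = \dim_{\Fq}\overline{M_i} - \dim_{\Fq}\overline{M_{i+1}}$, where $\overline{M_i}$ is the reduction of $M_i$ in $\overline{L} = \FqL{2}$. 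By Theorem~\ref{thm:A} it suffices to locate the $\overline{M_i}$ for $2t \le i \le 3t+1$, the band attached to the eigenvalue $-q^2$ of valuation $2t$; the remaining band and the isolated factor $e_{4t}=1$ then follow from the symmetry $e_i = e_{3t-i}$ and the stated dimension totals. Over the fraction field, the quadratic satisfied by $A$ on the non-trivial summand (derived from \eqref{eq:matrix1}) splits $L \otimes \Qp$ into the three eigenspaces, and the band in question measures the relative position of $L$ and the $(-q^2)$-eigenlattice.

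The combinatorial input is the $\Fq\GL(V)$-module structure of $\overline{L}$, as determined by the work of Bardoe--Sin and Sin: its composition factors are restricted simple modules twisted by Frobenius, indexed by $t$-tuples, and the dimension of the factor attached to a tuple is the product over the $t$ Frobenius levels of the digit numbers $d_k$, where $(1+x+\cdots+x^{p-1})^4$ records the multiplicities of the four coordinate functions on $V$. The relation $\lambda_{i} = p s_{i+1} - s_{i}$ with subscripts read modulo $t$ is precisely the compatibility imposed on the highest-weight data by the cyclic Frobenius action, so each $\vec s \in [3]^{t}$ labels a constituent of $\overline{L}$ of dimension $d(\vec s) = \prod_{i} d_{\lambda_{i}}$. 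What remains is to attach to each label the exact exponent at which its constituent enters the chain $\overline{M_\bullet}$, and to prove that this exponent equals $2t + \#\{j \mid s_{j} = 2\}$.

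The engine for this is to expand $A$ in a monomial (Teichm\"uller-character) basis adapted to the maximal torus, in which $A$ becomes block-diagonal along the Frobenius levels up to strictly lower-order terms; on each level the leading block is controlled by the coefficients $d_k$, and the $p$-adic valuation contributed by a level depends only on whether that level sits in the ``middle'' state $s_{j}=2$. Multiplying the level contributions shows the total valuation is governed solely by the number of twos, so summing the dimensions $d(\vec s)$ over all tuples with exactly $i$ twos---that is, over $\HH(i)$---yields $e_{2t+i} = \sum_{\vec s \in \HH(i)} d(\vec s)$. The cyclic product structure realizes this sum as a weighted trace of the $t$-th power of a $3\times 3$ transfer matrix with entries $d_{pb-a}$, which also accounts for the dimension totals in Theorem~\ref{thm:A}.

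I expect the main obstacle to be exactly this valuation bookkeeping: showing that the lower-order terms in the monomial expansion never lower the leading valuation, so that the filtration by the $\overline{M_i}$ is genuinely refined by the Frobenius-level labeling and the exponent is determined by the count of twos alone. Controlling these terms should require an induction over the levels together with the self-adjointness of $A$ (it is a symmetric matrix), the same self-duality that underlies the symmetry $e_i = e_{3t-i}$ and that pins down the middle of the filtration. Once the leading terms are controlled, the tuple-counting and the consistency checks of Theorem~\ref{thm:A}---the totals $q^4+q^2$ and $q^3+q^2+q$, and the trivial-module factor $e_{4t}=1$---complete the identification.
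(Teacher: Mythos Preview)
Your proposal has a genuine gap at its combinatorial core. You set $L=\RL{2}$ and then invoke ``the $\Fq\GL(V)$-module structure of $\overline{L}$, as determined by the work of Bardoe--Sin,'' asserting that the composition factors of $\overline{L}$ are indexed by $[3]^{t}$ with dimensions $d(\vec s)$. But Bardoe--Sin describes the \emph{points} module $\FqL{1}$, not the \emph{lines} module $\FqL{2}$. Indeed $\sum_{\vec s\in[3]^{t}} d(\vec s)+1=\lvert\LL_{1}\rvert$, which is not $\lvert\LL_{2}\rvert$; the lines module is larger, is not multiplicity-free, and its submodule lattice is not the order-ideal lattice of $[3]^{t}$. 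Likewise, your ``monomial (Teichm\"uller-character) basis adapted to the maximal torus'' is really the Singer-cycle character basis, and that works because the Singer quotient $\overline S$ acts \emph{regularly} on $\LL_{1}$; there is no such regular abelian action on $\LL_{2}$, so the diagonalisation you are counting on is not available there. Without the correct module structure and without a basis in which $A$ is approximately diagonal, the ``valuation bookkeeping'' you flag as the main obstacle cannot even be set up, let alone carried out.

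The paper's proof supplies exactly the missing idea: it transfers the problem from lines to points. Using the points--lines zero-intersection matrix $B=A_{1,2}$ one has $B^{t}B\equiv -[A+(q^{2}-q)I]\pmod{q^{2}}$ on $Y_{2}$, and together with the symmetry \eqref{eq:sym} this gives $e_{2t+i}(A)=e_{t-i}(B^{t}B)$ for $0\le i\le t$. Now $B^{t}B=A_{2,1}A_{1,s}$ factors through $\RL{1}$, where Bardoe--Sin \emph{does} apply and where the Singer character basis is simultaneously a right SNF basis for $A_{2,1}$ and a left SNF basis for $A_{1,2}$ (Lemma~\ref{lem:D}). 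The filtration levels on points are then read off from the known structure (Lemma~\ref{lem:C}), and specialising identifies $\Gamma(t-i)$ with $\HH(i)$. If you want to salvage your outline, the fix is precisely this detour through $\RL{1}$; working directly on $\RL{2}$ does not give you the module-theoretic control you are assuming.
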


\begin{remark}
When $p=2$, notice that $d(\vec{s})=0$ for any tuple $\vec{s}$ containing an adjacent $1$ and $3$.  Also, $d(\vec{s})=0$ for tuples $\vec{s}$ beginning with a $1$ and ending with a $3$ (and vice versa).  %
%
%
Thus the sum in Theorem~\ref{thm:B} is significantly easier to compute in this case. 
\end{remark}

As an example of how to use the above theorems, consider the case when $p=3$, $t=2$.  We have
\[
(1+x+x^{2})^{4} = 1 + 4x + 10x^{2} + 16x^{3} + 19x^{4} + 16x^{5} + 10x^{6} + 4x^{7} + x^{8},
\]
\begin{align*}
\HH(0) &= \{(11), (13), (31), (33)\}, \\
\HH(1) &= \{(21), (23), (12), (32)\}, \\
\HH(2) &= \{(22)\}.
\end{align*}
Using Theorem~\ref{thm:B} we compute
\begin{align*}
e_{4} &= d(11) + d(13) + d(31) + d(33) \\
      &= d_{2} \cdot d_{2} + d_{8} \cdot d_{0} + d_{0} \cdot d_{8} + d_{6} \cdot d_{6} \\
      &= 10 \cdot 10 + 1 \cdot 1 + 1 \cdot 1 + 10 \cdot 10 \\
      &= 202,
\end{align*}
\begin{align*}
e_{5} &= d(21) + d(23) + d(12) + d(32) \\
      &= d_{1} \cdot d_{5} + d_{7} \cdot d_{3} + d_{5} \cdot d_{1} + d_{3} \cdot d_{7} \\
      &= 4 \cdot 16 + 4 \cdot 16 + 16 \cdot 4 + 16 \cdot 4 \\
      &= 256,
\end{align*}
\[
e_{6} = d(22) = d_{4} \cdot d_{4} = 19 \cdot 19 = 361.
\]
The remaining nonzero multiplicities are now given by Theorem~\ref{thm:A}.  We collect this information in Table~\ref{table:q=9}.
\begin{table}[hdtp]
\begin{center}
\newlength{\Eheight}
\newlength{\strutheight}
\settoheight{\Eheight}{E}
\setlength\strutheight{1.5\Eheight}
\caption{The elementary divisors of the incidence matrix of lines vs.\ lines in $\PG(3,9)$, where two lines are incident when skew.}
\label{table:q=9}
\begin{tabular}{lrrrrrrr}
\hline
\rule{0pt}{\strutheight}Elem. Div. & $1$ & $3$ & $3^2$ & $3^4$ & $3^5$ & $3^6$ & $3^8$ \\
\rule{0pt}{\strutheight}Multiplicity & $361$ & $256$ & $6025$ & $202$ & $256$ & $361$ & $1$ \\ \hline
\end{tabular}
\end{center} 
\end{table}
\section{Elementary Divisors and Smith Normal Form Bases}%
%
%
In this section we collect a few useful results regarding elementary divisors.  Let $\RR$ be a discrete valuation ring, %
%
%
with $p \in \RR$ a prime generating the maximal ideal.  An $m \times n$ matrix with entries in $\RR$ can be viewed as a homomorphism of free $\RR$-modules of finite rank:
\[
\eta\colon \RR^{m} \to \RR^{n}.
\]
The elementary divisors of $\eta$ are by definition just the elementary divisors of the matrix, and for a fixed prime $p$ we always let $e_{i}(\eta)$ denote the multiplicity of $p^{i}$ as an elementary divisor of $\eta$.

Set $F = \RR/p\RR$.  If $L$ is an $\RR$-submodule of a free $\RR$-module $\RR^{l}$, then $\overline{L} = (L + p\RR^{l})/p\RR^{l}$ is an $F$-vector space.  For $i \ge 0$, define
\[
M_{i}(\eta) = \{x \in \RR^m \, | \, \eta(x) \in p^{i}\RR^n\}
\]
and
\[
N_{i}(\eta) = \{p^{-i}\eta(x) \, | \, x \in M_{i}(\eta)\}.
\]
For convenience we also define $N_{-1}(\eta) = \{0\}$.  Then we have chains of $\RR$-modules
\[
\RR^{m}=M_{0}(\eta) \supseteq M_{1}(\eta) \supseteq \cdots
\]
\[
N_{0}(\eta) \subseteq N_{1}(\eta) \subseteq \cdots
\]
and chains of $F$-vector spaces
\[
F^{m}=\overline{M_{0}(\eta)} \supseteq \overline{M_{1}(\eta)} \supseteq \cdots
\]
\[
\overline{N_{0}(\eta)} \subseteq \overline{N_{1}(\eta)} \subseteq \cdots.
\]
\begin{lemma}
\label{lem:A}
Let $\eta\colon \RR^m \to \RR^n$ be a homomorphism of free $\RR$-modules of finite rank, and let $e_{i}(\eta)$ denote the multiplicity of $p^{i}$ as an elementary divisor of $\eta$.  Then, for $i \ge 0$,
\[
e_{i}(\eta) = \dim_{F}\left(\overline{M_{i}(\eta)}/\overline{M_{i+1}(\eta)}\right) = \dim_{F}\left(\overline{N_{i}(\eta)}/\overline{N_{i-1}(\eta)}\right).
\]
\end{lemma}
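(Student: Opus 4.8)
The plan is to reduce everything to the Smith normal form of $\eta$ and then read off both quantities by a direct valuation count. Since $\RR$ is a discrete valuation ring, hence a PID, there exist $\RR$-bases $x_1,\dots,x_m$ of $\RR^m$ and $y_1,\dots,y_n$ of $\RR^n$, together with integers $0 \le a_1 \le \cdots \le a_r$ (where $r$ is the rank of $\eta$), such that $\eta(x_j) = p^{a_j}y_j$ for $1 \le j \le r$ and $\eta(x_j) = 0$ for $j > r$. By definition the elementary divisors of $\eta$ are the $p^{a_j}$, so $e_{i}(\eta) = \#\{\, j \le r : a_j = i \,\}$. The organizing principle is that the reductions $\bar{x}_1,\dots,\bar{x}_m$ form an $F$-basis of $F^m = \RR^m/p\RR^m$, and likewise the $\bar{y}_j$ for $F^n$; consequently, for any $\RR$-submodule $L$, the subspace $\overline{L}$ is spanned precisely by those basis vectors that occur with a unit coefficient in some element of $L$.

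First I would describe $M_{i}(\eta)$ in these coordinates. Writing $x = \sum_j c_j x_j$ with $c_j \in \RR$, one has $\eta(x) = \sum_{j \le r} c_j p^{a_j} y_j$, which lies in $p^{i}\RR^n$ if and only if $v(c_j) \ge i - a_j$ for each $j \le r$, where $v$ denotes the valuation. Thus $M_{i}(\eta) = \bigoplus_{j \le r} p^{\max(0,\,i-a_j)}\RR\cdot x_j \oplus \bigoplus_{j > r}\RR\cdot x_j$. Reducing modulo $p$, the vector $\bar{x}_j$ (for $j \le r$) survives in $\overline{M_{i}(\eta)}$ exactly when $i - a_j \le 0$, i.e. $a_j \ge i$, while every $\bar x_j$ with $j > r$ survives. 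Hence $\overline{M_{i}(\eta)}$ has basis $\{\bar{x}_j : j \le r,\ a_j \ge i\} \cup \{\bar{x}_j : j > r\}$, so $\dim_{F}\overline{M_{i}(\eta)} = \#\{j \le r : a_j \ge i\} + (m-r)$. Differencing the values at $i$ and $i+1$ telescopes the count and yields $\dim_{F}\bigl(\overline{M_{i}(\eta)}/\overline{M_{i+1}(\eta)}\bigr) = \#\{j \le r : a_j = i\} = e_{i}(\eta)$.

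The computation on the $N$-side is entirely parallel. For $x \in M_{i}(\eta)$ as above, $p^{-i}\eta(x) = \sum_{j \le r} c_j p^{a_j - i} y_j$, so that $N_{i}(\eta) = \bigoplus_{j \le r} p^{\max(a_j - i,\,0)}\RR\cdot y_j$. The coefficient of $y_j$ has valuation $v(c_j)+a_j-i \ge \max(a_j-i,0)$, with the minimum attained by a suitable choice of $c_j$; therefore $\bar{y}_j$ lies in $\overline{N_{i}(\eta)}$ precisely when $a_j \le i$, giving $\dim_{F}\overline{N_{i}(\eta)} = \#\{j \le r : a_j \le i\}$. Using the convention $N_{-1}(\eta) = \{0\}$ for the base case $i=0$ and differencing consecutive values gives $\dim_{F}\bigl(\overline{N_{i}(\eta)}/\overline{N_{i-1}(\eta)}\bigr) = \#\{j \le r : a_j = i\} = e_{i}(\eta)$. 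Since the $M$-chain is decreasing, the $N$-chain increasing, and the bar operation preserves inclusions, all the quotients in the statement are well defined.

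I do not expect a serious obstacle here: once bases realizing the Smith normal form are fixed, both identities are bookkeeping with valuations. The one point requiring care is the claim that $\overline{L}$ is spanned by exactly those $\bar{x}_j$ (resp. $\bar{y}_j$) attainable with a unit coefficient. This rests on the fact that the images of an $\RR$-basis form an $F$-basis of the reduction, so that an element of $L$ nonzero modulo $p$ must already have a unit coordinate, and such coordinates can be isolated; it follows directly from the linear independence of the $\bar{x}_j$ over $F$.
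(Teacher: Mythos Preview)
Your proof is correct and follows essentially the same approach as the paper: choose bases putting $\eta$ in Smith normal form, observe that each $M_i(\eta)$ and $N_i(\eta)$ then decomposes as a direct sum of $p$-power multiples of these basis vectors, and read off the dimensions of the reductions. The paper's proof is simply a terser version of what you wrote, leaving the explicit valuation bookkeeping to the reader.
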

\begin{proof}
From the theory of modules over principal ideal domains, there exists a basis $\BB$ of $\RR^{m}$ and a basis $\CC$ of $\RR^{n}$ with respect to which the matrix of $\eta$ is in Smith normal form.  Each of the above $\RR$-submodules $M_{i}(\eta)$ (resp.\ $N_{j}(\eta)$) are seen to have a basis consisting of $p$-power multiples of elements of $\BB$ (resp.\ $\CC$).  When we represent %
%
%
the modules in this way, the lemma is clear.
\end{proof}

For a given homomorphism $\eta\colon \RR^m \to \RR^{n}$, we will be interested in pairs of bases ($\BB$, $\CC$) with respect to which the matrix of $\eta$ is in diagonal form.  %
%
%
We define a \textit{left} SNF basis for $\eta$ to be any basis of $\RR^{m}$ that belongs to such a pair.  Similarly, a \textit{right} SNF basis for $\eta$ is any basis of $\RR^{n}$ belonging to such a pair.  We now describe how to construct such bases.

Suppose $\eta\colon \RR^{m} \to \RR^{n}$ is nonzero.  Then there is a unique largest nonnegative integer $l$ with $e_{l}(\eta) \neq 0$.  We have 
\[
\overline{M_{0}(\eta)} \supseteq \overline{M_{1}(\eta)} \supseteq \cdots \supseteq \overline{M_{l}(\eta)} \supsetneq \overline{\ker(\eta)}.
\]
where only the last inclusion is necessarily strict.  Choose a basis $\overline{\BB_{l+1}}$ of $\overline{\ker(\eta)}$ and extend it to a basis $\overline{\BB_{l}} \cup \overline{\BB_{l+1}}$ of $\overline{M_{l}(\eta)}$.  Continue in this fashion to get a basis $\cup_{i = 0}^{l+1}\overline{\BB_{i}}$ of $\overline{M_{0}(\eta)}$.  Now lift the elements of $\overline{\BB_{l+1}}$ to a set $\BB_{l+1}$ of preimages in $\ker(\eta)$.  Continuing, at each stage we enlarge $\BB_{i+1}$ by adjoining a set $\BB_{i}$ of preimages in $M_{i}(\eta)$ of $\overline{\BB_{i}}$.  By Nakayama's Lemma, the set
\[
\BB = \bigcup_{i = 0}^{l+1} \BB_{i}
\]
is an $\RR$-basis of $R^{m}$.

Notice that $N_{l}(\eta) = N_{l+1}(\eta) = \cdots$ .  Set $N' = N_{l}(\eta)$.  Then $N'$ is called the \textit{purification} of $\im \eta$, and is the smallest $\RR$-module direct summand of $\RR^{n}$ containing $\im \eta$.  The elementary divisors of $\eta$ remain the same if we change the codomain of $\eta$ to $N'$.  Choose a basis $\overline{\CC_{0}}$ of $\overline{N_{0}(\eta)}$ and extend it to a basis $\overline{\CC_{0}} \cup \overline{\CC_{1}}$ of $\overline{N_{1}(\eta)}$.  Continue in this fashion to get a basis $\cup_{i = 0}^{l} \overline{\CC_{i}}$ of $\overline{N_{l}(\eta)}$.  Now we lift the elements of $\overline{\CC_{0}}$ to a set $\CC_{0}$ of preimages in $N_{0}(\eta)$.  Continuing, at each stage we enlarge $\CC_{i}$ by adjoining a set $\CC_{i+1}$ of preimages in $N_{i+1}(\eta)$ of $\overline{\CC_{i+1}}$.  By Nakayama's Lemma, the set 
\[
\CC' = \bigcup_{i = 0}^{l} \CC_{i}
\]
is an $\RR$-basis of $N'$.  We then set 
\[
\CC = \bigcup_{i = 0}^{l+1} \CC_{i}
\]
to be any $\RR$-basis of $\RR^{n}$ obtained by adjoining to $\CC'$ some set $\CC_{l+1}$.
\begin{lemma}
\label{lemma:E}
\hfil
\begin{enumerate}
\item \label{item:E1} The basis $\BB$ constructed above is a left SNF basis for $\eta$. \\
\item \label{item:E2} The basis $\CC$ constructed above is a right SNF basis for $\eta$.
\end{enumerate}
\end{lemma}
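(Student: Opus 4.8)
The plan is to prove the two assertions in parallel, the engine for both being the map that $\eta$ induces on the successive quotients of the two filtrations. For $0 \le i \le l$ let $\phi_i \colon M_i(\eta) \to N_i(\eta)$ be the $\RR$-homomorphism $\phi_i(x) = p^{-i}\eta(x)$, which makes sense because $\eta(x) \in p^i\RR^n$ for $x \in M_i(\eta)$ and $\RR^n$ is torsion-free. The first and decisive step is to show that $\phi_i$ descends to an isomorphism of $F$-vector spaces
\[
\Phi_i \colon \overline{M_i(\eta)}/\overline{M_{i+1}(\eta)} \longrightarrow \overline{N_i(\eta)}/\overline{N_{i-1}(\eta)}.
\]
Well-definedness amounts to checking $\phi_i\bigl(M_i(\eta)\cap p\RR^m\bigr) \subseteq N_{i-1}(\eta)$: if $x = px' \in M_i(\eta)$ then $x' \in M_{i-1}(\eta)$ and $\phi_i(x) = \phi_{i-1}(x')$. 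Surjectivity is immediate from the definition of $N_i(\eta)$. For the kernel, the inclusion $\overline{M_{i+1}(\eta)} \subseteq \ker\Phi_i$ is clear; for the reverse I would unwind the hypothesis $\phi_i(x) \in N_{i-1}(\eta) + p\RR^n$ to produce $u \in M_{i-1}(\eta)$ with $\eta(x-pu) \in p^{i+1}\RR^n$, forcing $x \in M_{i+1}(\eta) + p\RR^m$. (By Lemma~\ref{lem:A} both quotients have dimension $e_i(\eta)$, as the isomorphism demands.)

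For assertion~\ref{item:E1} I keep the constructed left basis $\BB = \bigcup_{i=0}^{l+1}\BB_i$ and manufacture a companion right basis. Put $\CC'_i = \{\,p^{-i}\eta(b) \mid b \in \BB_i\,\} \subseteq N_i(\eta) \subseteq N'$ for $0 \le i \le l$. Since $\overline{\BB_i}$ is a basis of $\overline{M_i(\eta)}/\overline{M_{i+1}(\eta)}$ and $\Phi_i$ is an isomorphism, $\overline{\CC'_i}$ is a basis of $\overline{N_i(\eta)}/\overline{N_{i-1}(\eta)}$; assembling these up the filtration shows $\bigcup_{i=0}^{l}\CC'_i$ reduces to a basis of $\overline{N'} = \overline{N_l(\eta)}$, so by Nakayama's Lemma it is an $\RR$-basis of the (free, being a summand) module $N'$. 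Extend it by any complementary set $\CC'_{l+1}$ to a basis $\CC'$ of $\RR^n$. Then $\eta(b) = p^i\bigl(p^{-i}\eta(b)\bigr)$ sends each $b \in \BB_i$ to $p^i$ times its partner in $\CC'_i$, while $\eta$ annihilates $\BB_{l+1} \subseteq \ker\eta$; thus the matrix of $\eta$ in the bases $(\BB, \CC')$ is diagonal, with each $b \in \BB_i$ contributing the entry $p^i$. Hence $\BB$ is a left SNF basis.

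Assertion~\ref{item:E2} is the mirror image. Starting from the constructed right basis $\CC = \bigcup_{i=0}^{l+1}\CC_i$, for each $c \in \CC_i$ with $0 \le i \le l$ we have $c \in N_i(\eta)$, so I may pick $x_c \in M_i(\eta)$ with $p^{-i}\eta(x_c) = c$; set $\BB'_i = \{\,x_c \mid c \in \CC_i\,\}$. As $\Phi_i(\overline{x_c}) = \overline{c}$ and $\overline{\CC_i}$ is a basis of $\overline{N_i(\eta)}/\overline{N_{i-1}(\eta)}$, applying $\Phi_i^{-1}$ shows $\overline{\BB'_i}$ is a basis of $\overline{M_i(\eta)}/\overline{M_{i+1}(\eta)}$. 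Adjoining a lift $\BB'_{l+1}$ of a basis of $\overline{\ker\eta} = \overline{M_{l+1}(\eta)}$ and invoking Nakayama's Lemma, $\BB' = \bigcup_{i=0}^{l+1}\BB'_i$ is an $\RR$-basis of $\RR^m$. Since $\eta(x_c) = p^i c$ and $\eta(\BB'_{l+1}) = 0$, the matrix of $\eta$ in the bases $(\BB', \CC)$ is diagonal, so $\CC$ is a right SNF basis.

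The main obstacle is the opening step, namely establishing that $\Phi_i$ is a genuine isomorphism, and in particular that $\ker\Phi_i$ is exactly $\overline{M_{i+1}(\eta)}$. The delicacy is that dividing by $p^i$ and reducing modulo $p$ do not commute in any naive way, so one must track precisely how elements of $M_i(\eta)$ that are divisible by $p$, or whose image lands in $N_{i-1}(\eta)$ after reduction, are absorbed into the next stage of the filtration. Once $\Phi_i$ is available, everything else is bookkeeping with Nakayama's Lemma together with the observation that, after matching partners, $\eta$ acts on the $i$-th graded piece as multiplication by $p^i$.
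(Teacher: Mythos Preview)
Your argument is correct and follows the same underlying construction as the paper: in both parts you build exactly the companion set the paper does (the elements $p^{-i}\eta(b)$ for $b\in\BB_i$ in part~\eqref{item:E1}, and preimages $x_c\in M_i(\eta)$ with $\eta(x_c)=p^ic$ in part~\eqref{item:E2}). The difference is only in how you certify that this companion set is a basis. You make the graded isomorphism $\Phi_i\colon \overline{M_i}/\overline{M_{i+1}}\to\overline{N_i}/\overline{N_{i-1}}$ explicit, use it to transport the basis property across, and finish with Nakayama on $N'$ (resp.\ $\RR^m$). The paper instead stays over $\RR$: it checks the companion elements are $\RR$-linearly independent and then compares the index of $\im\eta$ in the submodule they generate to its index in $N'$, invoking Lemma~\ref{lem:A} for the numerics. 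Your route is a bit more structural and explains \emph{why} the construction works; the paper's is marginally quicker. One small edge case: your well-definedness check $\phi_i(M_i\cap p\RR^m)\subseteq N_{i-1}$ reads oddly at $i=0$ (where $N_{-1}=\{0\}$ but $\phi_0(p\RR^m)$ need only land in $p\RR^n$); this is harmless since you already note that both graded pieces have dimension $e_i(\eta)$ by Lemma~\ref{lem:A}, so surjectivity alone forces $\Phi_i$ to be an isomorphism.
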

\begin{proof}
For $x \in \BB_{i}$, $0 \le i \le l$, consider the element $y = p^{-i}\eta(x) \in N'$.  The collection of all such elements form a linearly independent set, since the basis $\BB$ extends the basis $\BB_{l+1}$ of $\ker(\eta)$.  Let $Y$ denote the $\RR$-submodule generated by these elements.  From Lemma~\ref{lem:A} we see that the index of $\im \eta$ in $Y$ is the same as the index of $\im \eta$ in $N'$.  Hence $Y = N'$, and so these elements form a basis of $N'$.  The matrix of $\eta$ with respect to $\BB$ and any basis of $\RR^{n}$ obtained by extending this basis of $N'$ will then be in diagonal form.  This proves part~\eqref{item:E1}.

Now, for each $y \in \CC_{i}$, $0 \le i \le l$, choose an element $x \in M_{i}(\eta)$ such that $\eta(x) = p^{i}y$.  Let $X$ denote the $\RR$-submodule of $\RR^{m}$ generated by these elements.  The images of these elements are certainly linearly independent, hence $X \cap \ker(\eta) = \{0\}$.  By Lemma~\ref{lem:A} we see that $\im \eta$ and $\eta(X + \ker(\eta))$ have the same index in $N'$.  Therefore $\RR^{m} = X \oplus \ker(\eta)$, and adjoining any basis of $\ker(\eta)$ to these generators of $X$ gives a basis of $\RR^{m}$.  With respect to this basis and $\CC$, the matrix of $\eta$ is in diagonal form.
\end{proof}
We end this section with an easy but very useful result.
\begin{lemma}
\label{lem:B}
Let $\gamma\colon \RR^m \to \RR^n$ be another $\RR$-module homomorphism, and suppose that for some $k \ge 1$ we have
\[
\eta(x) \equiv \gamma(x) \pmod {p^{k}}, \hbox{\quad for all $x \in \RR^m$}.
\]
Then
\[
e_{i}(\eta) = e_{i}(\gamma), \hbox{\quad for $0 \le i \le k-1$}.
\]
\end{lemma}
\begin{proof}
Verify that $M_{i}(\eta) = M_{i}(\gamma)$,  for $0 \le i \le k$.  The conclusion is now immediate from Lemma~\ref{lem:A}.
\end{proof}
\section{Proof of Theorem~\ref{thm:A}}
Since all of the elementary divisors of $A$ are powers of $p$, we lose nothing by viewing %
%
%
$A$ as a matrix over $\Zp$, the ring of $p$-adic integers.  The matrix $A$ %
%
%
represents a homomorphism of free $\Zp$-modules 
\[
\ZpL{2} \to \ZpL{2}
\]
that sends a $2$-subspace to the (formal) sum of the $2$-subspaces incident with it.  We abuse notation by using the same symbol for both the matrix and the map.  We also apply our matrices and maps on the right (so $AB$ means ``do $A$ first, then $B$'').

Let $\mathbf{1} = \sum_{x \in \mathcal{L}_{2}}x$ and set
\[
Y_{2} = \Bigl\{\sum_{x \in \mathcal{L}_{2}}a_{x}x \in \ZpL{2} \, \Big\vert \, \sum_{x \in \mathcal{L}_{2}}a_{x} = 0\Bigr\}.
\]
Since $\abs{\mathcal{L}_{2}}$ is a unit in $\Zp$, we have the decomposition
\begin{equation*}
\ZpL{2} = \Zp\mathbf{1} \oplus Y_{2}.
\end{equation*}

We now prove Theorem~\ref{thm:A}.  The map $A$ respects the above decomposition of $\ZpL{2}$, and thus we get all of the elementary divisors of $A$ by computing those of the restriction of $A$ to each summand.  Since $(\mathbf{1})A = q^{4}\mathbf{1}$, we see that $e_{4t}(A) = e_{4t}(A|_{Y_{2}}) + 1$ and $e_{i}(A) = e_{i}(A|_{Y_{2}})$ for $i \neq 4t$.

Rewriting equation~\eqref{eq:matrix1} we get
\[
A(A + (q^{2}-q)I) = q^{3}I + (q^4 - q^3)J
\]
and if we now restrict $A$ to $Y_{2}$, the above equation reads%
%
%
\begin{equation}
\label{eq:matrix2}
A|_{Y_{2}}(A|_{Y_{2}} + (q^{2}-q)I) = q^{3}I.
\end{equation}
Looking carefully at this equation one sees that a left (resp. right) SNF basis for $A|_{Y_{2}}$ is a right (resp. left) SNF basis for $A|_{Y_{2}} + (q^{2}-q)I$.  It follows from equation~\eqref{eq:matrix2} that $e_{i}(A|_{Y_{2}})=0$ for $i > 3t$, and so $e_{4t}(A)=1$.  Thus we have established part~\eqref{item:A5} of the theorem (and most of part~\eqref{item:A2}).%
%
%

It also follows immediately from equation~\eqref{eq:matrix2} that
\begin{equation}
\label{eq:sym}
e_{i}(A|_{Y_{2}} + (q^{2}-q)I) = e_{3t-i}(A|_{Y_{2}})
\end{equation}
for $0 \le i \le 3t$.  Since $A|_{Y_{2}} \equiv A|_{Y_{2}} + (q^{2}-q)I \pmod {p^{t}}$, we have from Lemma~\ref{lem:B} that
\begin{equation}
e_{i}(A|_{Y_{2}}) = e_{i}(A|_{Y_{2}} + (q^{2}-q)I) = e_{3t-i}(A|_{Y_{2}})
\end{equation}
for $0 \le i < t$, which is part~\eqref{item:A1} of the theorem.

It remains to prove parts~\eqref{item:A3} and~\eqref{item:A4} of the theorem, and also the statement from part~\eqref{item:A2} that $e_{i}(A) = 0$ for $t < i < 2t$.  Denote by $V_{\lambda}$ the $\lambda$-eigenspace for $A$ (as a matrix over $\Qp$, the $p$-adic numbers).  Since $V_{q}$ is a $\Qp$-subspace of $\Qp^{\LL_{2}}$, the intersection $V_{q} \cap \ZpL{2}$ is a pure sublattice of $\ZpL{2}$ and the same is true for $V_{-q^2} \cap \ZpL{2}$.  %
%
%
Notice that $V_{q} \cap \ZpL{2} \subseteq N_{t}(A|_{Y_{2}})$ and $V_{-q^2} \cap \ZpL{2} \subseteq M_{2t}(A|_{Y_{2}})$.  Therefore,
\[
q^4 + q^2 = \dim_{\Fp}(\overline{V_{q} \cap \ZpL{2}}) \le \dim_{\Fp}\overline{N_{t}(A|_{Y_{2}})} = \sum_{i=0}^{t}e_{i}(A|_{Y_{2}})
\]
and
\[
q^3 + q^2 + q = \dim_{\Fp}(\overline{V_{-q^2} \cap \ZpL{2}}) \le \dim_{\Fp}\overline{M_{2t}(A|_{Y_{2}})} = \sum_{i=2t}^{3t}e_{i}(A|_{Y_{2}}).
\]
Since $(q^4 + q^2) + (q^3+q^2+q) = \dim_{\Fp}\overline{Y_{2}}$, the above inequalities are actually \textit{equalities}, and the remaining elementary divisor multiplicities must be zero.  This completes the proof of Theorem~\ref{thm:A}. \qed

\begin{remark}
The above proof simply exploits equation~\eqref{eq:matrix1}, and makes no use of the geometry of $\PG(3,q)$.  Therefore Theorem~\ref{thm:A} is also true for the adjacency matrix $A$ of any strongly regular graph with the same parameters.
\end{remark}

Theorem~\ref{thm:B} will follow from a more general result which we prove below.  Here we explain the connection between these theorems.  Let $B$ denote the incidence matrix with rows indexed by $\LL_{1}$ and columns indexed by $\LL_{2}$, where incidence again means zero intersection.  $B^t$ denotes the transpose of $B$, and is just the incidence matrix of lines vs\@. points.  It is easy to check that
\begin{equation}
\label{eq:matrixB}
B^{t}B = (q^3 + q^2)I + (q^3 + q^2 - q - 1)A + (q^3 + q^2 -q)(J - A - I).
\end{equation}
Just like with $A$, we denote also by $B$ and $B^t$ the incidence maps these matrices represent over $\Zp$.  Notice that $(\mathbf{1})B^{t}B = q^{4}(q^2 + q + 1)(q + 1)\mathbf{1}$, and so for $i \neq 4t$ we have $e_{i}(B^{t}B) = e_{i}(B^{t}B|_{Y_{2}})$.  Thus again we concentrate on the summand $Y_{2}$.

We can rewrite the equation~\eqref{eq:matrixB} as 
\[
B^{t}B = -[A + (q^2 - q)I] + q^{2}I + (q^3 + q^2 - q)J
\]
and upon restriction of maps to $Y_{2}$ it reads 
\[
B^{t}B|_{Y_{2}} = -[A|_{Y_{2}} + (q^2 - q)I] + q^{2}I.
\]%
%
%
Applying Lemma~\ref{lem:B} we have, for $0 \le i < 2t$, 
\begin{equation}
e_{i}(B^{t}B|_{Y_{2}}) = e_{i}(A|_{Y_{2}} + (q^2 - q)I).
\end{equation}
Using equation~\eqref{eq:sym}, and considering only nonzero multiplicities, we then get the relations%
%
%
\begin{equation}
\label{eq:mult}
e_{2t+i}(A) = e_{t-i}(B^{t}B), \hbox{\quad for $0 \le i \le t$.}
\end{equation}

Therefore to prove Theorem~\ref{thm:B} it is sufficient to compute the ($p$-adic) elementary divisors %
%
%
of the matrix $B^{t}B$.  The final theorem below describes these.  We can actually do this at the level of generality mentioned in the introduction.
\section{The General Result}
For the remainder of the paper, $V$ is an $(n+1)$-dimensional vector space over $\Fq$, where $q = p^t$ is a prime power.  $A_{r,s}$ is the $\abs{\LL_{r}} \times \abs{\LL_{s}}$ incidence matrix with rows indexed by the $r$-subspaces of $V$ and columns indexed by the $s$-subspaces of $V$, and two subspaces are incident if and only if their intersection is trivial.  We will compute the elementary divisors of $A_{r,1}A_{1,s}$ as a matrix over $\Zp$.%
%
%

Let $\HH$ denote the set of $t$-tuples of integers $\vec{s}=(s_{0}, \dots , s_{t-1})$ that satisfy, for $0\le i\le t-1$,
\begin{enumerate}
\item $1 \le s_{i} \le n$,
\item $0 \le ps_{i + 1} - s_{i} \le (p-1)(n+1)$,
\end{enumerate}
with subscripts read modulo $t$.  First introduced in~\cite{hamada}, the set $\HH$ was later used in~\cite{bardoe:sin} to describe the module structure of $\FqL{1}$ under the action of $\GL(n+1,q)$.  For nonnegative integers $\alpha, \beta$, define the subsets of $\HH$
\[
\HH_{\alpha}(s) = \Bigl\{(s_{0}, \dots , s_{t-1}) \in \HH \, \Big\vert \, \sum_{i=0}^{t-1}\max\{0, s-s_{i}\} = \alpha\Bigr\}
\]
and
\begin{align*}
\leftsub{\beta}{\HH}(r) &= \{(n+1-s_{0}, \dots , n+1-s_{t-1}) \, | \, (s_{0}, \dots , s_{t-1}) \in \HH_{\beta}(r)\} \\
&= \Bigl\{(s_{0}, \dots , s_{t-1}) \in \HH \, \Big\vert \, \sum_{i=0}^{t-1} \max\{0, s_{i}-(n+1-r)\} = \beta\}\Bigr\}.
\end{align*}

To each tuple $\vec{s} \in \HH$ we associate a number $d(\vec{s})$ as follows.  For $\vec{s} = (s_0, \dots ,s_{t-1}) \in \HH$ define the integer tuple $\vec{\lambda} = (\lambda_{0}, \dots ,\lambda_{t-1})$ by 
\[
\lambda_{i} = ps_{i+1} - s_{i} \hbox{\quad (subscripts mod $t$)}.
\]
For an integer $k$, set $d_{k}$ to be the coefficient of $x^{k}$ in the expansion of $(1 + x + \cdots + x^{p-1})^{n+1}$.  Explicitly,
\[
d_{k} = \sum_{j = 0}^{\lfloor k/p \rfloor}(-1)^{j}\binom{n+1}{j}\binom{n+k-jp}{n}.
\]
Finally, set $d(\vec{s}) = \prod_{i=0}^{t-1}d_{\lambda_{i}}$.
\begin{theorem}
\label{thm:C}
Let $e_{i}=e_{i}(A_{r,1}A_{1,s})$ denote the multiplicity of $p^{i}$ as a $p$-adic elementary divisor of $A_{r,1}A_{1,s}$.%
%
%
\begin{enumerate}
\item $e_{t(r+s)} = 1$.
\item For $i \neq t(r+s)$,
\[
e_{i} = \sum_{\vec{s} \in \Gamma(i)} d(\vec{s}),
\]
where
\[
\Gamma(i) = \bigcup_{\substack{\alpha + \beta = i \\ 0 \le \alpha \le t(s-1) \\ 0 \le \beta \le t(r-1)}}  \leftsub{\beta}{\HH}(r) \cap \HH_{\alpha}(s).
\]
\end{enumerate}
Summation over an empty set is interpreted to result in $0$.
\end{theorem}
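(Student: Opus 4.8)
The plan is to compute the elementary divisors over the unramified extension of $\Zp$ of degree $t$ with residue field $\Fq$, call it $\mathbb{Z}_{q}$, rather than over $\Zp$ itself. Because $p$ remains a uniformizer of $\mathbb{Z}_{q}$, extension of scalars along $\Zp \hookrightarrow \mathbb{Z}_{q}$ leaves the Smith normal form unchanged, so the multiplicities $e_{i}(A_{r,1}A_{1,s})$ are the same over either ring, while working over $\mathbb{Z}_{q}$ lets us split the relevant torus. I would view the composite as the two-step map
\[
\mathbb{Z}_{q}^{\mathcal{L}_{r}} \xrightarrow{A_{r,1}} \mathbb{Z}_{q}^{\mathcal{L}_{1}} \xrightarrow{A_{1,s}} \mathbb{Z}_{q}^{\mathcal{L}_{s}},
\]
both steps being $\GL(n+1,q)$-equivariant, and organize the entire computation around a single decomposition of the middle point space $\mathbb{Z}_{q}^{\mathcal{L}_{1}}$ with respect to which the two factors are simultaneously diagonalized in the $p$-adic sense.

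To produce that decomposition I would use the monomial/weight description of the point space due to Hamada and Bardoe--Sin, refined $p$-adically in the manner of Chandler--Sin--Xiang. Enlarging $\mathbb{Z}_{q}$ if necessary to contain enough roots of unity, a Singer torus $T$ acts on $\mathbb{Z}_{q}^{\mathcal{L}_{1}}$ and splits it into weight spaces whose weights are indexed by the tuples $\vec{s} \in \HH$; the number of weights attached to a given type $\vec{s}$ is exactly the monomial count $d(\vec{s}) = \prod_{i} d_{\lambda_{i}}$, since each digit-sum $\lambda_{i} = ps_{i+1}-s_{i}$ must be realized as a sum of $n+1$ base-$p$ digits, contributing the coefficient $d_{\lambda_{i}}$ of $x^{\lambda_{i}}$ in $(1+x+\cdots+x^{p-1})^{n+1}$. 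Each incidence map is $T$-equivariant, hence in the multiplicity-free situation acts as a scalar on each weight space by Schur's lemma, and that scalar is a Gauss-sum-type quantity whose $p$-adic valuation is computed by Stickelberger's theorem. The outcome I expect is that $A_{1,s}$ scales the type-$\vec{s}$ space by a unit times $p^{\alpha(\vec{s})}$ with $\alpha(\vec{s}) = \sum_{i} \max\{0, s - s_{i}\}$, while the purification of $\im A_{r,1}$ meets the type-$\vec{s}$ space in $p^{\beta(\vec{s})}$ times the full type-$\vec{s}$ lattice, with $\beta(\vec{s}) = \sum_{i} \max\{0, s_{i}-(n+1-r)\}$; the complementary truncation on the $r$-side reflects $A_{r,1} = A_{1,r}^{t}$ together with the type duality $s_{i} \mapsto n+1-s_{i}$ that defines $\leftsub{\beta}{\HH}(r)$ from $\HH_{\beta}(r)$.

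Because both factors respect the same $T$-weight decomposition, composing them adds the two valuations on each type space: the composite contributes an elementary divisor $p^{\alpha(\vec{s})+\beta(\vec{s})}$ with multiplicity equal to $\dim$ of the type-$\vec{s}$ space, namely $d(\vec{s})$, for every $\vec{s}$ lying in $\leftsub{\beta}{\HH}(r) \cap \HH_{\alpha}(s)$. Summing over all types with $\alpha+\beta = i$ yields the stated formula $e_{i} = \sum_{\vec{s} \in \Gamma(i)} d(\vec{s})$, and the constraints $1 \le s_{i} \le n$ force $0 \le \alpha \le t(s-1)$ and $0 \le \beta \le t(r-1)$, matching the index ranges in $\Gamma(i)$. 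The one remaining weight, the all-ones vector $\mathbf{1}$ spanning the trivial summand, is treated separately: $A_{r,1}$ sends it to a nonzero multiple of the all-ones vector on points and $A_{1,s}$ again to an all-ones vector, the two $p$-powers multiplying to $p^{t(r+s)}$, which gives $e_{t(r+s)} = 1$.

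The main obstacle is establishing the simultaneous diagonalization cleanly: I must show that the purification of $\im A_{r,1}$ and the $p$-adic scaling of $A_{1,s}$ are governed by one and the same weight decomposition of $\mathbb{Z}_{q}^{\mathcal{L}_{1}}$, and that each map is genuinely scalar, not merely block-triangular, on each piece. This is where the multiplicity-freeness of the $T$-action and the exact Gauss-sum valuations supplied by Stickelberger do the real work, and where the two previously separate one-index computations must be made compatible. Should a direct simultaneous-basis argument prove awkward, the fallback is the counting strategy already used for Theorem~\ref{thm:A}: by Lemma~\ref{lem:A} one bounds from below the dimension of each graded piece by the contribution forced by each factor, and then observes that summing over all $i$ recovers the full reduction of $\mathbb{Z}_{q}^{\mathcal{L}_{1}}$, so that every inequality must be an equality and the putative multiplicities are exact.
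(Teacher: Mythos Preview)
Your proposal is correct and follows essentially the same route as the paper: both arguments decompose $R^{\mathcal{L}_{1}}$ into rank-one character spaces for the Singer cycle to obtain a basis that is simultaneously a left SNF basis for $A_{1,s}$ and a right SNF basis for $A_{r,1}$, then read off the exponent $\alpha(\vec{s})+\beta(\vec{s})$ on each character and count $d(\vec{s})$ characters per type. The only cosmetic difference is that where you invoke Gauss sums and Stickelberger directly to pin down $\alpha$ and $\beta$, the paper cites \cite[Theorem~3.3]{chandler:sin:xiang:2006} (whose proof is exactly that Stickelberger computation) together with a short duality argument; your ``scalar on each weight space'' phrasing is slightly loose since the target weight spaces in $R^{\mathcal{L}_{s}}$ need not be rank one, but what you actually use---the $p$-divisibility of the image of each $v_{\chi}$---is well-defined and is precisely the content of the paper's Lemma~\ref{lem:D} and Lemma~\ref{lem:C}.
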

It will be technically convenient actually to work over a larger ring than $\Zp$.  %
%
%
Let $K = \Qp(\xi)$ be the unique unramified extension of degree $t(n+1)$ over $\Qp$, where $\xi$ is a primitive $(q^{n+1}-1)^{\text{\scriptsize th}}$ root of unity in $K$.  We set $\RR = \Zp[\xi]$ to be the ring of integers in $K$.  Then $\RR$ is a discrete valuation ring, $p \in \RR$ generates the maximal ideal, and $F = \RR/p\RR \cong \mathbb{F}_{q^{n+1}}$.%
%
%

Set $G = \GL(n+1, q)$.  If one fixes a basis of $V$ then there is %
%
%
a natural action of $G$ on the sets $\LL_{i}$, and in this way $\RR^{\LL_{i}}$ becomes an $\RR G$-permutation module.  As before, $A_{r,s}$ will denote both the matrix and the incidence map
\[
\RL{r} \to \RL{s}
\]
that sends an $r$-subspace to the (formal) sum of $s$-subspaces incident with it.  Since the action of $G$ preserves incidence, the $A_{r,s}$ are $\RR G$-module homomorphisms.  Clearly the $M_{i}(A_{r,s})$ and $N_{j}(A_{r,s})$ are $\RR G$-submodules.  We again have the $\RR G$-decomposition
\begin{equation*}
\RL{k} = \RR \mathbf{1} \oplus Y_{k},
\end{equation*}
where $\mathbf{1} = \sum_{x \in \LL_{k}} x$ and $Y_{k}$ is the kernel of the splitting map
\[
\sum_{x \in \LL_{k}} a_{x}x \mapsto \Bigl(\frac{1}{\abs{\LL_{k}}}\sum_{x \in \LL_{k}} a_{x}\Bigr)\mathbf{1},
\]
and all the $A_{r,s}$ respect this decomposition.  Reduction modulo $p$ %
%
%
induces a homomorphism of $FG$-permutation modules
\[
\FL{r} \to \FL{s},
\]
which we denote by $\overline{A_{r,s}}$.

Let us indicate how we will prove Theorem~\ref{thm:C}.  Suppose that we are able to find unimodular matrices $P$, $Q$, and $E$ such that 
\[
P A_{r,1} E^{-1} = D_{r,1}
\]
and
\[
E A_{1,s} Q^{-1} = D_{1,s}
\]
where the matrices on the right are diagonal.  Then these diagonal entries are the elementary divisors of the respective matrices $A_{r,1}$ and $A_{1,s}$.  Since then
\[
P A_{r,1} A_{1,s} Q^{-1} = D_{r,1} D_{1,s},
\]
we will then have obtained the elementary divisors of the product matrix (provided that we have detailed enough knowledge of the elementary divisors of the factor matrices).

In general is not possible to find such a matrix $E$ (\cite{rushanan} is a source of information on this topic).  Yet that is exactly what we will do.  The information that we need about the elementary divisors of $A_{r,1}$ and $A_{1,s}$ we obtain from \cite{chandler:sin:xiang:2006}.  
\begin{lemma}
\label{lem:D}
There exists a basis $\mathscr{B}$ of $\RL{1}$ that is simultaneously a left SNF basis for $A_{1,s}$ and a right SNF basis for $A_{r,1}$.
\end{lemma}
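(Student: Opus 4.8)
The plan is to recast the statement in terms of the filtrations introduced in Section~3 and then to exploit the multiplicity-free structure of $\FL{1}$. By Lemma~\ref{lemma:E}, a basis of $\RL{1}$ is a left SNF basis for $A_{1,s}$ exactly when it is adapted to the descending chain $\{M_{i}(A_{1,s})\}$ (meaning that, after reduction, it restricts to a basis of each $\overline{M_{i}(A_{1,s})}$), and it is a right SNF basis for $A_{r,1}$ exactly when it is adapted to the ascending chain $\{N_{j}(A_{r,1})\}$. Since both chains lie in $\RL{1}$, the content of the lemma is the existence of a single basis of $\RL{1}$ adapted to both chains simultaneously. By Nakayama's lemma it suffices to produce a basis of the $F$-vector space $\FL{1}$ adapted to the two reduced chains $\{\overline{M_{i}(A_{1,s})}\}$ and $\{\overline{N_{j}(A_{r,1})}\}$, and then to lift it.

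Next I would bring in the module theory. The modules $M_{i}(A_{1,s})$ and $N_{j}(A_{r,1})$ are $RG$-submodules of $\RL{1}$, so their reductions are $FG$-submodules of $\FL{1}$. The key structural input, which I would take from \cite{chandler:sin:xiang:2006} (resting on \cite{bardoe:sin}), is that over the large field $F \cong \mathbb{F}_{q^{n+1}}$ the $FG$-permutation module $\FL{1}$ is multiplicity-free; equivalently, its lattice of $FG$-submodules is distributive. Moreover, \cite{chandler:sin:xiang:2006} identifies the terms of the two reduced chains explicitly as members of this Bardoe--Sin lattice indexed by $\HH$. Thus $\{\overline{M_{i}(A_{1,s})}\}$ and $\{\overline{N_{j}(A_{r,1})}\}$ are two chains sitting inside one distributive lattice of subspaces of $\FL{1}$.

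The purely linear-algebraic step is then immediate: a finite family of subspaces of a finite-dimensional vector space admits a common adapted basis (a basis such that each subspace is spanned by a subset of it) if and only if the sublattice it generates under $+$ and $\cap$ is distributive. Here that sublattice is contained in the distributive submodule lattice of $\FL{1}$, hence is itself distributive, so a common adapted $F$-basis of $\FL{1}$ exists.

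The main obstacle is the lift back to $\RL{1}$. A basis vector $\bar b$ lying in $\overline{M_{i}(A_{1,s})} \cap \overline{N_{j}(A_{r,1})}$ must be lifted to an element of $M_{i}(A_{1,s}) \cap N_{j}(A_{r,1})$, which requires reduction modulo $p$ to be compatible with these intersections, i.e.\ $\overline{M_{i}(A_{1,s}) \cap N_{j}(A_{r,1})} = \overline{M_{i}(A_{1,s})} \cap \overline{N_{j}(A_{r,1})}$. This is exactly the point where abstract distributivity is not by itself enough, and where I would invoke the explicit $R$-level description from \cite{chandler:sin:xiang:2006}: the relevant submodules are pure in $\RL{1}$ (direct summands as $R$-modules), so the associated bigraded pieces lift compatibly and each $\bar b$ admits a lift in the required intersection. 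Nakayama's lemma then shows that the lifted set is an $R$-basis $\mathscr{B}$ of $\RL{1}$, and by construction it is adapted to both $R$-chains, hence simultaneously a left SNF basis for $A_{1,s}$ and a right SNF basis for $A_{r,1}$ by Lemma~\ref{lemma:E}.
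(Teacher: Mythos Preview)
Your route is genuinely different from the paper's. The paper does not invoke the Bardoe--Sin distributivity of the $FG$-submodule lattice here at all; instead it uses a Singer cycle. Let $S\le G$ be cyclic of order $q^{n+1}-1$ and $\overline{S}$ its quotient by the scalars. Then $\overline{S}$ acts regularly on $\LL_{1}$ and $\lvert\overline{S}\rvert\in R^{\times}$, so $R\overline{S}$ carries a full system of orthogonal idempotents $h_{\chi}$ and $R^{\LL_{1}}=\bigoplus_{\chi}Rv_{\chi}$ with each summand free of rank one. Because $M_{i}(A_{1,s})$ and $N_{j}(A_{r,1})$ are $R\overline{S}$-submodules, applying $h_{\chi}$ to any lift shows that $\overline{v_{\chi}}\in\overline{M_{i}}$ forces $v_{\chi}\in M_{i}$, and likewise for $N_{j}$. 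Thus the single explicit basis $\{v_{\chi}\}$ works for both maps, with no separate lifting argument and no appeal to the structure of $\FL{1}$ as an $FG$-module.

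Your approach can be repaired, but the justification you give for the lifting step is wrong. The modules $M_{i}(A_{1,s})$ are \emph{not} pure in $R^{\LL_{1}}$: each one contains $pR^{\LL_{1}}$, so whenever $M_{i}\neq R^{\LL_{1}}$ it is a proper full-rank submodule and hence not a direct summand. The reference \cite{chandler:sin:xiang:2006} does not supply such purity either (indeed, its own arguments go through a Singer/monomial basis much as the paper does here). What actually makes the lift work is precisely the inclusion $pR^{\LL_{1}}\subseteq M_{i}(A_{1,s})$: if $m\in M_{i}$ and $n\in N_{j}$ satisfy $m\equiv n\pmod{p}$, then $n=m-pw\in M_{i}$ as well, so $n\in M_{i}\cap N_{j}$ is a lift of the common reduction, giving $\overline{M_{i}\cap N_{j}}=\overline{M_{i}}\cap\overline{N_{j}}$. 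With this correction your distributivity argument goes through; the paper's Singer-cycle construction sidesteps the issue entirely by building the basis at the $R$-level from the start.
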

\begin{proof}
The group $G$ has a cyclic subgroup $S$ which is isomorphic to $F^\times$.
Since $\RR$ contains a primitive $\abs{S}^{\text{\scriptsize th}}$ root of unity, it follows that $K$
is a splitting field for $S$ and that the irreducible $K$-characters of $S$ 
take their values in $\RR$\@.
Let $\overline{S}$ denote the quotient of $S$ by the subgroup of scalar transformations.  Then $\overline{S}$ acts regularly on $\LL_{1}$, and $\abs{\overline{S}} = \abs{\LL_{1}}$ is a unit in $\RR$. Therefore, for each character $\chi$ of $\overline{S}$, the group ring $\RR \overline{S}$ contains an idempotent element $h_{\chi}$ that projects onto the (rank one) $\chi$-isotypic component of $\RL{1}$.  We thus obtain an $\RR$-basis $\mathscr{B} = \{v_{\chi} \, | \, \chi \in \Hom(\overline{S}, R^\times)\}$ of $\RL{1}$, where $v_{\chi} \in h_{\chi} \cdot \RL{1}$ such that $p \not \vert \,v_{\chi}$.

Now let us construct a left SNF basis for $A_{1,s}$, in the manner and notation described following the proof of Lemma~\ref{lem:A}.  Since each $F\overline{S}$-submodule of $\FL{1}$ is a direct sum of the isotypic components that it contains, we see that we can take each of the sets $\overline{\BB_{i}}$ to be a subset of $\overline{\mathscr{B}}$.  Suppose we lift $\overline{v_{\chi}} \in \overline{\BB_{i}}$ to an element $f \in M_{i}(A_{1,s})$.  Writing
\[
f = \sum_{\theta \in \Hom(\overline{S}, R^\times)} c_{\theta}v_{\theta},
\]
we see that $\overline{f} = \overline{c_{\chi}} \overline{v_{\chi}}$ and so $c_{\chi}$ must be a unit in $\RR$.  Since $M_{i}(A_{1,s})$ is an $\RR \overline{S}$-submodule, we have
that $h_{\chi} \cdot f = c_{\chi} v_{\chi}$ is also in $M_{i}(A_{1,s})$.  This proves that we may choose to lift $\overline{v_{\chi}}$ to $v_{\chi}$ in the construction, and that $\mathscr{B}$ is a left SNF basis for $A_{1,s}$.

An identical argument (lifting each $\overline{v_{\chi}}$ into some $N_{j}(A_{r,1})$) shows that $\mathscr{B}$ is a right SNF basis for $A_{r,1}$.
\end{proof}

It remains to show that the elementary divisor multiplicities are as stated in the theorem.  First we need a more precise description of the $FG$-submodule lattice of $\FL{1}$.  The facts that we need are as follows (see~\cite[Theorem A]{bardoe:sin})\@.  $\FL{1} = F\mathbf{1} \oplus \overline{Y_{1}}$ is a multiplicity-free $FG$-module, and the $FG$-composition factors of $\overline{Y_{1}}$ are in bijection with the set $\HH$.  The dimension over $F$ of the composition factor corresponding to the tuple $\vec{s}$ is $d(\vec{s})$.  Moreover, if we give $\HH$ the partial order
\[
(s_{0}, \dots , s_{t-1}) \le (s'_{0}, \dots , s'_{t-1}) \iff s_{i} \le s'_{i} \hbox{  for all i}
\]
then the $FG$-submodule lattice of $\overline{Y_{1}}$ is isomorphic to the lattice of order ideals of $\HH$, and the tuples contained in an order ideal correspond to the composition factors of the respective submodule.  Thus it is clear what is meant by %
%
%
the statement that a subquotient of $\overline{Y_{1}}$ \textit{determines} a subset of $\HH$.

\begin{remarks}\hfil
\begin{enumerate}
\item The field $k$ in~\cite{bardoe:sin} is actually an algebraic closure of $\Fq$, but (as observed in~\cite{chandler:sin:xiang:2006}) it follows from~\cite[Theorem A]{bardoe:sin} that all $kG$-submodules of $k^{\LL_{1}}$ are simply scalar extensions of $\Fq G$-modules, and therefore~\cite[Theorem A]{bardoe:sin} is also true over our field $F \cong \mathbb{F}_{q^{n+1}}$.  This observation also permits us to make use of certain results from \cite{chandler:sin:xiang:2006}, where the field is $\Fq$.
\item \label{item:inc} It should also be noted that the incidence relation considered in~\cite{bardoe:sin, sin:2004, chandler:sin:xiang:2006} is non-zero intersection (i.e., the complementary relation where two subspaces are incident if and only if their intersection is non-trivial).  If $A_{r,s}^{\prime}$ is the corresponding incidence matrix for non-zero intersection, then we have
\[
A_{r,s} = J - A_{r,s}^{\prime}.
\]
In particular,
\[
A_{r,s}|_{Y_{r}} = -A_{r,s}^{\prime}|_{Y_{r}}.
\]
Therefore the ($p$-adic) Smith normal forms of $A_{r,s}$ and $A_{r,s}^{\prime}$ can differ only with respect to where they map $\mathbf{1}$.  This accounts for the extra term appearing in the calculation of $p$-ranks in~\cite{bardoe:sin, sin:2004, chandler:sin:xiang:2006}.
\end{enumerate}
\end{remarks}

\begin{lemma}
\label{lem:C}
\leavevmode
\begin{enumerate}
\item \label{item:C1} The $FG$-module $\overline{M_{\alpha}(A_{1,s}|_{Y_{1}})}/\overline{M_{\alpha+1}(A_{1,s}|_{Y_{1}})}$ determines the subset $\HH_{\alpha}(s)$.
\item \label{item:C2} The $FG$-module $\overline{N_{\beta}(A_{r,1}|_{Y_{r}})}/\overline{N_{\beta-1}(A_{r,1}|_{Y_{r}})}$ determines the subset $\leftsub{\beta}{\HH}(r)$.
\end{enumerate}
\end{lemma}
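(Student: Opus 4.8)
The plan is to identify the filtration of $\overline{Y_1}$ coming from the chains $\overline{M_\alpha(A_{1,s}|_{Y_1})}$ and $\overline{N_\beta(A_{r,1}|_{Y_r})}$ with explicit order ideals in the lattice $\HH$, and then read off which composition factors appear in each successive quotient. Recall from the structure theorem of \cite{bardoe:sin} that $\overline{Y_1}$ is multiplicity-free with composition factors indexed by $\HH$, and that the $FG$-submodule lattice is the lattice of order ideals of $\HH$ under the componentwise order. Because the modules $M_\alpha$ and $N_\beta$ are $\RR G$-submodules, their reductions are $FG$-submodules, and hence each corresponds to an order ideal. The task is therefore purely combinatorial once we know exactly which order ideals occur.

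First I would invoke the already-established simultaneous SNF basis $\mathscr{B}$ from Lemma~\ref{lem:D}, which diagonalizes both $A_{1,s}$ and $A_{r,1}$ with respect to a single basis of $\RL{1}$. This is the crucial input: it means that the elementary-divisor filtrations of $A_{1,s}$ (on the domain side) and of $A_{r,1}$ (on the codomain side) are governed by the \emph{same} decomposition of $\RL{1}$ into rank-one isotypic pieces, and each basis vector $v_\chi$ carries a well-defined $p$-adic valuation for its image under $A_{1,s}$ and a well-defined valuation as an element of $N_\bullet(A_{r,1})$. Since the composition factors of $\overline{Y_1}$ are themselves built from these isotypic lines, each factor acquires a single valuation, and so the filtration by the $M_\alpha$ (resp.\ $N_\beta$) refines to a grading on composition factors. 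What remains is to match the grading value to the combinatorial statistics $\sum_i \max\{0,s-s_i\}$ and $\sum_i \max\{0,s_i-(n+1-r)\}$ defining $\HH_\alpha(s)$ and $\leftsub{\beta}{\HH}(r)$.

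The key step, and the main obstacle, is computing the precise valuation attached to each composition factor $\vec{s}$, i.e.\ showing that the elementary divisor of $A_{1,s}$ on the line $v_\chi$ sitting in the factor indexed by $\vec{s}$ is exactly $p^{\alpha}$ with $\alpha = \sum_i \max\{0, s-s_i\}$, and dually $p^{\beta}$ with $\beta = \sum_i \max\{0, s_i-(n+1-r)\}$ for $A_{r,1}$. This is where the detailed elementary-divisor information from \cite{chandler:sin:xiang:2006} enters: that paper computes the elementary divisors of the point-to-$s$-subspace incidence maps, and via the monomial/isotypic description the valuation on each $\GL$-composition factor is given by precisely such a truncated sum over the Hamada tuple. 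I would quote their formula, translate it into our indexing (taking care of the $J$ versus $J-A'$ shift noted in Remark~\ref{item:inc}, which only affects $\mathbf{1}$ and hence does not disturb the analysis on $Y_1$), and verify that the factor $\vec{s}$ enters $\overline{M_\alpha}$ but not $\overline{M_{\alpha+1}}$ exactly when $\sum_i \max\{0,s-s_i\}=\alpha$.

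Finally, I would assemble the two parts. For part~\eqref{item:C1}, a composition factor $\vec{s}$ lies in $\overline{M_\alpha(A_{1,s}|_{Y_1})}$ iff its associated valuation is at least $\alpha$, so it contributes to the quotient $\overline{M_\alpha}/\overline{M_{\alpha+1}}$ iff its valuation equals $\alpha$, which is the defining condition of $\HH_\alpha(s)$; since the submodule lattice is the lattice of order ideals, this quotient determines exactly the subset $\HH_\alpha(s)$. Part~\eqref{item:C2} follows by the dual argument applied to the increasing chain $N_\beta(A_{r,1}|_{Y_r})$, where the relevant statistic is $\sum_i \max\{0, s_i-(n+1-r)\}$, matching the definition of $\leftsub{\beta}{\HH}(r)$ (and the equivalence of the two descriptions of $\leftsub{\beta}{\HH}(r)$ given in the text handles the complementation $s_i \mapsto n+1-s_i$ automatically). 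The whole argument hinges on the simultaneous basis making these two one-sided filtrations compatible with a single grading of $\overline{Y_1}$ by composition factors.
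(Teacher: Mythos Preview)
Your treatment of part~\eqref{item:C1} is essentially the paper's: the statement is precisely the content of \cite[Theorem 3.3]{chandler:sin:xiang:2006}, modulo the $J$ versus $J-A'$ shift you correctly identified. However, your invocation of Lemma~\ref{lem:D} here is unnecessary and somewhat misleading. The simultaneous SNF basis is not used to prove Lemma~\ref{lem:C}; rather, both lemmas are independent ingredients combined later in the proof of Theorem~\ref{thm:C}. The filtration $\overline{M_\alpha(A_{1,s}|_{Y_1})}$ already consists of $FG$-submodules of $\overline{Y_1}$, and the order-ideal description of \cite{bardoe:sin} together with the explicit identification in \cite{chandler:sin:xiang:2006} is all that is needed for~\eqref{item:C1}.

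The real gap is in part~\eqref{item:C2}. You write that it ``follows by the dual argument'' and that the valuation for $A_{r,1}$ is ``dually'' $\sum_i \max\{0, s_i-(n+1-r)\}$, but you never supply that dual argument. The cited result of \cite{chandler:sin:xiang:2006} describes the $M_\alpha$-filtration of $A_{1,s}$ on its \emph{domain} $Y_1$; it does not directly describe the $N_\beta$-filtration of $A_{r,1}$ on its \emph{codomain} $Y_1$. The paper bridges this by introducing the $G$-invariant nondegenerate symmetric bilinear form on $\RL{k}$ for which $\LL_k$ is orthonormal, identifying each $\FL{k}$ with its contragredient, and observing that $\overline{A_{s,r}}$ is the dual of $\overline{A_{r,s}}$. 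It then proves by induction on $\beta$ that
\[
\overline{N_\beta(A_{r,s}|_{Y_r})}^{\perp} = \overline{M_{\beta+1}(A_{s,r}|_{Y_s})},
\]
so the successive quotients of the $N_\beta$-chain for $A_{r,1}$ are the duals of the successive quotients of the $M_\beta$-chain for $A_{1,r}$. Since dual composition factors correspond under $(s_0,\dots,s_{t-1}) \leftrightarrow (n+1-s_0,\dots,n+1-s_{t-1})$ \cite[Lemma 2.5(c)]{bardoe:sin}, part~\eqref{item:C1} applied to $A_{1,r}$ then yields~\eqref{item:C2}. Your proposal gestures at the complementation $s_i \mapsto n+1-s_i$ but omits the orthogonal-complement identity that makes the duality rigorous; without it, the claim that the $N_\beta$ valuation equals $\sum_i \max\{0,s_i-(n+1-r)\}$ is asserted rather than proved.
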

\begin{proof}
Part~\eqref{item:C1} is the content of~\cite[Theorem 3.3]{chandler:sin:xiang:2006} (see Remarks above).  In order to prove~\eqref{item:C2}, first observe that for each $k$, $\LL_{k}$ is an orthonormal basis for a non-degenerate $G$-invariant symmetric bilinear form $\langle \cdot \, , \cdot \rangle_{k}$ on $\RL{k}$.  Use the induced form on $\FL{k}$ to identify each permutation module with its dual (contragredient) module, and observe that $\overline{A_{s,r}}$ is the dual map induced by $\overline{A_{r,s}}$.  Since the tuples $(s_{0}, \dots , s_{t-1})$ and $(n + 1 - s_{0}, \dots , n + 1 - s_{t-1})$ are determined by dual composition factors~\cite[Lemma 2.5(c)]{bardoe:sin}, part~\eqref{item:C2} will follow immediately if we can show the $FG$-module isomorphism
\[
\Bigl(\overline{N_{\beta}(A_{r,s}|_{Y_{r}})}/\overline{N_{\beta-1}(A_{r,s}|_{Y_{r}})}\Bigr)^{*} \cong \overline{M_{\beta}(A_{s,r}|_{Y_{s}})}/\overline{M_{\beta+1}(A_{s,r}|_{Y_{s}})}.
\]
It is sufficient to show that
\[
\overline{N_{\beta}(A_{r,s}|_{Y_{r}})}^{\perp} = \overline{M_{\beta + 1}(A_{s,r}|_{Y_{s}})}.
\]
We proceed by induction on $\beta$.  When $\beta = 0$, we have
\begin{align*}
\overline{N_{0}(A_{r,s}|_{Y_{r}})}^{\perp} &= \{\overline{y} \, \vert \, y \in Y_{s}, \, \langle(x)A_{r,s}, y\rangle_{s} \equiv 0 \pmod p \hbox{ for all $x \in Y_{r}$}\} \\
&= \{\overline{y} \, \vert \, y \in Y_{s}, \, \langle x, (y)A_{s,r}\rangle_{r} \equiv 0 \pmod p \hbox{ for all $x \in Y_{r}$}\} \\
&= \overline{M_{1}(A_{s,r}|_{Y_{s}})}.
\end{align*}
where the last equality follows from the non-degeneracy of the induced form on $\overline{Y_{r}}$.  Now assume $\beta > 0$.  It is easy to check that $\overline{M_{\beta + 1}(A_{s,r}|_{Y_{s}})} \subseteq \overline{N_{\beta}(A_{r,s}|_{Y_{r}})}^{\perp}$.  We then have 
\[
\overline{M_{\beta + 1}(A_{s,r}|_{Y_{s}})} \subseteq \overline{N_{\beta}(A_{r,s}|_{Y_{r}})}^{\perp} \subseteq \overline{N_{\beta - 1}(A_{r,s}|_{Y_{r}})}^{\perp} = \overline{M_{\beta}(A_{s,r}|_{Y_{s}})},
\]
with the equality by our induction hypothesis.  Since clearly $e_{\beta}(A_{s,r}|_{Y_{s}}) = e_{\beta}(A_{r,s}|_{Y_{r}})$, it now follows from Lemma~\ref{lem:A} and the above inclusions that $\overline{M_{\beta + 1}(A_{s,r}|_{Y_{s}})} = \overline{N_{\beta}(A_{r,s}|_{Y_{r}})}^{\perp}$.
\end{proof}
\begin{proof}[Proof of Theorem~\ref{thm:C}]
Fix an $FG$-composition series
\[
\{0\} \subseteq F\mathbf{1} = U_{0} \subseteq U_{1} \subseteq \cdots \subseteq \FL{1}.
\]
Starting with the $F$-basis $\{\overline{v_{1_{\overline{S}}}}\}$ %
%
%
of $U_{0}$, we can extend this using elements of $\overline{\mathscr{B}}$ to a basis of $U_{1}$.  Continuing in this fashion, we thus get the disjoint union 
\[
\mathscr{B} = \{v_{1_{\overline{S}}}\} \cup \DD_{1} \cup \cdots
\]%
%
%
where $\overline{\DD_{i}}$ are the elements of $\overline{\mathscr{B}}$ extending $U_{i-1}$ to $U_{i}$.  It is clear that each quotient $U_{i}/U_{i-1}$ ($i \ge 1$) is isomorphic as an $F\overline{S}$-module to the $F\overline{S}$-submodule of $\overline{Y_{1}}$ spanned by $\overline{\DD_{i}}$.  If the simple $FG$-module $U_{i}/U_{i-1}$ determines the tuple $\vec{s} \in \HH$, then we will say that %
%
%
each element of $\DD_{i}$ \textit{determines} the tuple $\vec{s}$.  This assignment of elements of $\mathscr{B}$ to tuples in $\HH$ is well-defined independent of the above composition series, as follows from the fact that the isomorphism type of an $F\overline{S}$-submodule of $\overline{Y_{1}}$ is completely determined by the characters it affords.

By Lemma~\ref{lem:C}, the tuple determined by $v_{\chi}$ belongs to $\HH_{\alpha}(s) \cap \leftsub{\beta}{\HH}(r)$ precisely when the following two conditions hold:
\begin{enumerate}
\item $\overline{v_{\chi}} \in \overline{M_{\alpha}(A_{1,s}|_{Y_{1}})}$ but $\overline{v_{\chi}} \notin \overline{M_{\alpha+1}(A_{1,s}|_{Y_{1}})}$
\item $\overline{v_{\chi}} \in \overline{N_{\beta}(A_{r,1}|_{Y_{r}})}$ but $\overline{v_{\chi}} \notin \overline{N_{\beta-1}(A_{r,1}|_{Y_{r}})} $.
\end{enumerate}
It immediately follows that 
\[
e_{i}(A_{r,1}A_{1,s}|_{Y_{r}}) = \sum_{\alpha + \beta = i} \sum_{\vec{s} \in \HH_{\alpha}(s) \cap \leftsub{\beta}{\HH}(r)} d(\vec{s}), \hbox{\quad for $i \ge 0$}.
\]
Since $\HH_{\alpha}(s) = \emptyset$ for $\alpha > t(s-1)$ and $\leftsub{\beta}{\HH}(r) = \emptyset$ for $\beta > t(r-1)$, we have
\[
e_{i}(A_{r,1}A_{1,s}|_{Y_{r}}) = 0, \hbox{\quad for $i > t(r+s-2)$}.
\]
We will use the $q$-binomial coefficients
\[
\begin{bmatrix}m\\ \ell\end{bmatrix}_{q}=\frac{(q^m-1)(q^{m-1}-1)\cdots(q^{m-\ell+1}-1)}{(q-1)(q^2-1)\cdots(q^\ell-1)}
\] 
for non-negative integers $m$ and $\ell$ with $m\geq\ell$.
Then
\[
(\mathbf{1})A_{r,1}A_{1,s} = q^{r+s}\qbinom{n}{r}\qbinom{n+1-s}{1}\mathbf{1},
\]
and we have $e_{t(r+s)}(A_{r,1}A_{1,s}) = 1$.  This completes the proof of Theorem~\ref{thm:C}.%
%
%
\end{proof}
\begin{remark}
Since $d(\vec{s})=0$ for $\vec{s} \in [n]^{t} \setminus \HH$, there is no effect on the numerical result of Theorem~\ref{thm:C} if we replace $\HH$ with $[n]^{t}$ in the notation preceding the statement of the theorem.
\end{remark}
\begin{proof}[Proof of Theorem~\ref{thm:B}]
Consider the situation when $n+1 = 4$ and $r = s = 2$ (so $A_{2,2} = A$ and $A_{1,2} = B$).  Replace $\HH$ with $[3]^{t}$ in the notation preceding Theorem~\ref{thm:C}.  Then it is easy to see that
\[
\HH_{\alpha}(2) = \{\vec{s} \in [3]^{t} \, | \, \vec{s} \hbox{ contains exactly $\alpha$ ones}\}
\]
and
\[
\leftsub{\beta}{\HH}(2) = \{\vec{s} \in [3]^{t} \, | \, \vec{s} \hbox{ contains exactly $\beta$ threes}\}.
\]
Hence
\begin{align*}
\Gamma(i) &= \bigcup_{\alpha+\beta=i}\bigl(\HH_{\alpha}(2) \cap \leftsub{\beta}{\HH}(2)\bigr) \\
&= \{\vec{s} \in [3]^{t} \, | \, \vec{s} \hbox{ contains exactly $t-i$ twos}\} \\
&= \HH(t-i).
\end{align*}
Therefore, for $0 \le i \le t$,
\[
e_{t-i}(B^{t}B) = \sum_{\vec{s} \in \HH(i)} d(\vec{s})
\]
and in view of equation~\eqref{eq:mult} we see that Theorem~\ref{thm:B} follows from Theorem~\ref{thm:C}.
\end{proof}

As mentioned in the introduction, the problem of computing the elementary divisors of $A_{r,s}$ in general is still very much unsolved.  The $p$-ranks of the incidence matrices $A_{r,s}$ were computed in~\cite{sin:2004}.  Observe that the $p$-rank of an integer matrix is just the multiplicity of $p^{0}$ as a $p$-adic elementary divisor.  %
%
%
We end the paper with the following easy corollary of Theorem~\ref{thm:C}.
\begin{corollary}
Notation is that of Theorem~\ref{thm:C}.  Let $e_{i}(A_{r,s})$ denote the multiplicity of $p^{i}$ as a $p$-adic elementary divisor %
%
%
of $A_{r,s}$.  Then, for $0 \le i < t$,
\[
e_{i}(A_{r,s}) = \sum_{\vec{s} \in \Gamma(i)} d(\vec{s}).
\]
\end{corollary}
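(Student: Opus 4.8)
The plan is to deduce the Corollary from Theorem~\ref{thm:C} by comparing $A_{r,s}$ with the product $A_{r,1}A_{1,s}$ modulo $p^{t}$, and then applying Lemma~\ref{lem:B}. Throughout I view all matrices as maps of free $\Zp$-modules, so that ``elementary divisor'' means $p$-adic elementary divisor, exactly as in Theorem~\ref{thm:C}.

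First I would compute the entries of the product $A_{r,1}A_{1,s}$ by a direct count. Since $(U)A_{r,1} = \sum_{P \not\subseteq U} P$, where $P$ ranges over the points $\LL_{1}$, applying $A_{1,s}$ shows that the $(U,W)$-entry of $A_{r,1}A_{1,s}$ is the number of points $P$ with $P \not\subseteq U$ and $P \not\subseteq W$. Inclusion--exclusion over $\LL_{1}$ then gives this entry as
\[
\frac{q^{n+1} - q^{r} - q^{s} + q^{\dim(U \cap W)}}{q-1}.
\]
Next I would reduce this integer modulo $p^{t} = q$. Because $q - 1$ is a $p$-adic unit with $\tfrac{1}{q-1} \equiv -1 \pmod{q}$, and because $r, s \ge 1$ force $q^{r} \equiv q^{s} \equiv 0 \pmod{q}$, the entry is congruent to $-1$ when $\dim(U \cap W) = 0$ and to $0$ when $\dim(U \cap W) \ge 1$ (in the latter case $q^{\dim(U \cap W)} \equiv 0$ as well). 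As the $(U,W)$-entry of $A_{r,s}$ is $1$ precisely when $U \cap W = \{0\}$ and $0$ otherwise, this yields the congruence
\[
A_{r,1}A_{1,s} \equiv -A_{r,s} \pmod{p^{t}}.
\]

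Finally I would apply Lemma~\ref{lem:B} with $k = t$ to the two maps $A_{r,1}A_{1,s}$ and $-A_{r,s}$, obtaining $e_{i}(A_{r,1}A_{1,s}) = e_{i}(-A_{r,s})$ for $0 \le i < t$. Since $-A_{r,s}$ is obtained from $A_{r,s}$ by the unimodular factor $-I$, the two share a Smith normal form, so $e_{i}(A_{r,s}) = e_{i}(A_{r,1}A_{1,s})$ in this range. For $0 \le i < t$ we have $i < t \le t(r+s)$, hence $i \neq t(r+s)$, so part~(2) of Theorem~\ref{thm:C} applies and gives $e_{i}(A_{r,1}A_{1,s}) = \sum_{\vec{s} \in \Gamma(i)} d(\vec{s})$. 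Combining these equalities proves the Corollary. The only genuine work is the entry computation together with the mod-$p^{t}$ reduction; the delicate point is to treat $\tfrac{1}{q-1}$ as a $p$-adic unit and to verify the $\dim(U \cap W) \ge 1$ case, which is exactly where the hypotheses $r, s \ge 1$ (ensuring the high powers of $q$ vanish modulo $q$) are needed.
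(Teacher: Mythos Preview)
Your proof is correct and follows essentially the same approach as the paper: both compute the $(U,W)$-entry of $A_{r,1}A_{1,s}$ by counting points outside $U$ and $W$, deduce the congruence $A_{r,1}A_{1,s} \equiv -A_{r,s} \pmod{p^{t}}$, and then invoke Lemma~\ref{lem:B} together with Theorem~\ref{thm:C}. The only cosmetic difference is that the paper records the entry count in terms of $q$-binomial coefficients rather than your closed form $\tfrac{q^{n+1}-q^{r}-q^{s}+q^{\dim(U\cap W)}}{q-1}$, but the mod-$q$ reduction is identical.
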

\begin{proof}
Let $x \in \LL_{r}$.  Then 
\[
(x)A_{r,s} = \sum_{y \in \LL_{s}} a_{x,y}y,
\]
where
\begin{align*}
a_{x,y} &= \left\lvert\{z \in \LL_{1} \, \vert \, z \cap x = \{0\} \hbox{ and } z \cap y = \{0\}\}\right\rvert \\
&= \begin{cases}
   \qbinom{n+1}{1}-\qbinom{r}{1}-\qbinom{s}{1}, & \hbox{ if } x \cap y \neq \{0\} \\
   \qbinom{n+1}{1}-\qbinom{r}{1}-\qbinom{s}{1}+\qbinom{k}{1}, & \hbox{ if } \dim(x \cap y) = k \ge 1.
   \end{cases}
\end{align*}
Then $a_{x,y} \equiv -1 \pmod q$ when $x \cap y = \{0\}$ and $q$ divides $a_{x,y}$ otherwise.  Hence 
\[
A_{r,1}A_{1,s} \equiv -A_{r,s} \pmod {p^{t}}
\]
and the corollary now follows from Lemma~\ref{lem:B}.
\end{proof}
\section{Acknowledgements}
The second author received generous support through the Chat Yin Ho Memorial Scholarship, for which he wishes to thank the family and friends of Professor Ho. We also thank the Banff International Research Station, where discussion of this work began at a workshop in March, 2009.
\bibliographystyle{amsplain}

\end{document}